\newcommand{\norm}[1]{\left\Vert#1\right\Vert}
\newcommand{\origin}{o}
\newcommand{\mball}{B_X}
\newcommand{\Mspace}{X}
\newcommand{\diam}[1]{\delta\left(#1\right)}
\newcommand{\thick}[1]{\Delta\left(#1\right)}
\newtheorem{prop}[section]{Proposition}
\begin{document}

\title{Complete sets need not be reduced in Minkowski
  spaces\thanks{Senlin Wu is partially supported by the National
    Natural Science Foundation of China (grant numbers 11371114 and
    11171082) and by Deutscher Akademischer Austauschdienst.}}



\author{Horst Martini \and Senlin Wu}


\institute{Horst Martini \at
              Faculty of Mathematics, Chemnitz University of Technology, 09107 Chemnitz, Germany\\
              \email{horst.martini@mathematik.tu-chemnitz.de}           
           \and
           Senlin Wu \at
           Department of Applied Mathematics, Harbin University of
           Science and Technology, 150080 Harbin, China\\
              \email{wusenlin@outlook.com}           
}

\date{Received: date / Accepted: date}

\maketitle

\begin{abstract}
  It is well known that in $n$-dimensional Euclidean space ($n\geq 2$)
  the classes of (diametrically) complete sets and of bodies of
  constant width coincide. Due to this, they both form a proper
  subfamily of the class of reduced bodies. For $n$-dimensional
  Minkowski spaces, this coincidence is no longer true if $n\geq
  3$.
  Thus, the question occurs whether for $n\geq 3$ any complete set is
  reduced. Answering this in the negative for $n\geq 3$, we
  construct $(2^{k}-1)$-dimensional ($k\geq 2$) complete sets which are not reduced.
  \keywords{bodies of constant width\and (diametrically) complete sets \and Hadamard matrix \and
    Minkowski Geometry \and reduced body \and Walsh matrix}
\subclass{46B20 \and 52A20 \and 52A21 \and 52A40 \and 52B11}
\end{abstract}

\section{Introduction}
\label{sec:introduction}
We denote by $\Mspace=(\mathbb{R}^n, \norm{\cdot})$ an $n$-dimensional
normed or \emph{Minkowski space}, i.e., an $n$-dimensional real Banach
space with origin $\origin$. We will use the common abbreviations
${\rm aff}$ and ${\rm conv}$ for affine and convex hull, respectively. A \emph{convex body} in $\Mspace$ is a compact, convex
set with interior points, and the norm $\norm{\cdot}$ is induced by the
$\origin$-symmetric convex body $\mball$, called the \emph{unit ball}
of $\Mspace$, via 
\begin{displaymath}
  \norm{x}=\min\{\lambda\geq 0:~x\in\lambda\mball\},~\forall x\in\mathbb{R}^{n}.
\end{displaymath}
An interesting part of the geometry of Minkowski spaces (see \cite{Tho}, \cite{M-S-W}, and \cite{M-S}) refers to natural extensions of
notions from classical convexity to normed spaces. Such notions
are, e.g., special classes of convex bodies, like those of constant width, complete sets, and reduced bodies. It is well known
that also in Minkowski spaces the \emph{width} of a convex body $K$ in
direction $u$ is the (Minkowskian) distance of the two supporting
hyperplanes $H_1, H_2$ of $K$ which are both parallel to the
$(n-1)$-dimensional subspace orthogonal (in the Euclidean sense) to $u$. The minimum and the maximum of the
\emph{width function} of $K$ are called the \emph{thickness} $\thick{K}$
and the \emph{diameter} $\diam{K}$ of $K$, respectively. In any Minkowski
space $\Mspace$, \emph{bodies of constant width} are precisely determined by
the condition $\thick{K}=\diam{K}$, and a convex body is called
\emph{(diametrically) complete} in $\Mspace$ if it is not properly contained
in a compact set of the same diameter. In a sense dually, a convex
body $R$ is said to be \emph{reduced} in $\Mspace$ if $\thick{C}<\thick{R}$
holds for any convex body $C$ properly contained in $R$. There are
interesting open problems about complete and reduced bodies in
Minkowski spaces (see the survey \cite{L-M2}), and even for the Euclidean
norm surprisingly elementary questions are still unsolved
(cf. \cite{L-M1}). E.g., it is not known whether in Euclidean $n$-space
$E^n$ ($n\geq 3$) there are reduced convex $n$-polytopes.

It is also interesting that the notion of reduced body, although it
somehow dualizes that of completeness, yields a proper superset of the
class of complete sets in $E^n$. The reason is that in $E^n$ the notions
of constant width and completeness coincide, and that one can easily construct
reduced bodies that are not of constant width. In Minkowski spaces,  
this coincidence
of constant width and completeness is only assured for $n = 2$  
(Meissner's theorem; see p. 98 in \cite{M-S}),
and therefore in any Minkowski plane the class of reduced bodies  
clearly contains that of complete sets.
Thus, the question remains whether in $n$-dimensional Minkowski spaces  
($n\geq 3$)
any complete set is reduced. A wrong short formulation was given in the survey \cite{L-M2} (cf. line 10 of the Abstract and, in  
repeated form, line 13 of page 406), which would imply that in  
Minkowski spaces any complete set is reduced. To correct this, we want to construct non-planar complete sets that are not reduced.

\section{Results}
\label{sec:results}
Let $\Mspace$ be the Minkowski space $l_{1}^{3}$, i.e., the Banach space
on $\mathbb{R}^{3}$ endowed with the taxicab norm. Then $\mball$ is the crosspolytope whose vertices
are
\begin{displaymath}
  \pm(1,0,0),~\pm(0,1,0),\text{ and }\pm(0,0,1).
\end{displaymath}
Put $K$ to be the convex hull of the following
four points (see Figure \ref{fig:ex1}):
\begin{displaymath}
  a_{1}=(-1,-1,-1),~a_{2}=(1,1,-1),~a_{3}=(1,-1,1),\text{ and }a_{4}=(-1,1,1).
\end{displaymath}

\begin{figure}
  \centering
\includegraphics{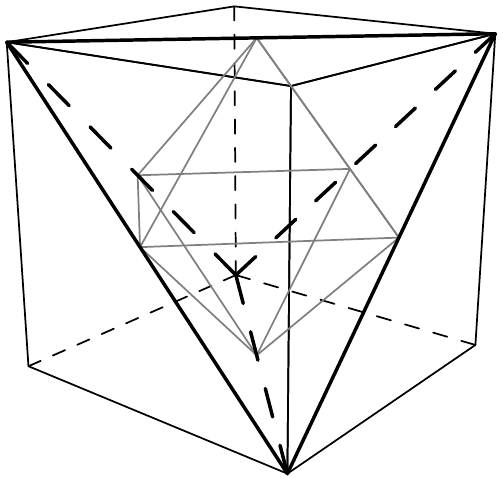}
  \caption{The tetrahedron $K$ is not reduced.}
  \label{fig:ex1}
\end{figure}

Clearly, 
\begin{equation}
  \label{eq:1}
  \norm{a_{i}-a_{j}}=4,~\forall\{i,j\}\subseteq\{1,2,3,4\}.
\end{equation}
\par
\emph{Claim 1: The tetrahedron $K$ is a complete set whose diameter is $4$.}
\begin{proof}
  We only need to show that, for each boundary point $x$ of $K$, there
  exists a point $y\in K$ such that $\norm{x-y}=\diam{K}$, which
  follows directly from \eqref{eq:1} and the trivial fact that
  \begin{displaymath}
    \norm{a_{i}-\frac{1}{3}(a_{j}+a_{k}+a_{l})}=4,~\forall\{i,j,k,l\}=\{1,2,3,4\}.
  \end{displaymath}\qed
\end{proof}

\emph{Claim 2: $\thick{K}=2$.}
\begin{proof}
From the equalities
\begin{gather*}
  \frac{1}{2}(a_{2}+a_{3})=(1,0,0),~\frac{1}{2}(a_{1}+a_{2})=(0,0,-1),\\
\frac{1}{2}(a_{2}+a_{4})=(0,1,0),~\frac{1}{2}(a_{1}+a_{3})=(0,-1,0),\\
  \frac{1}{2}(a_{3}+a_{4})=(0,0,1),~\frac{1}{2}(a_{1}+a_{4})=(-1,0,0),
\end{gather*}
it follows that $B_{X}\subseteq K$. Therefore $\thick{K}\geq 2$. Since
the hyperplanes
\begin{displaymath}
\{(\alpha,\beta,\gamma):~\gamma=1\}~\text{and}~\{(\alpha,\beta,\gamma):~\gamma=-1\}   
\end{displaymath}
are two parallel supporting
hyperplanes of $K$ and the distance between them is $2$,
$\thick{K}\leq 2$. Therefore, $\thick{K}=2$.\qed
\end{proof}

Now we are ready to show that $K$ is not reduced. Namely, the
thickness of the convex body
\begin{displaymath}
  K\cap\{(\alpha,\beta,\gamma):~\alpha+\beta+\gamma\geq -1\},
\end{displaymath}
which is a proper subset of $K$, is still $2$.

\bigskip

Now we try to extend this example to higher dimensions. Our
main tool is that of Walsh matrices $\mathbf{H}(2^{n})$
(cf. \cite[Chapters I and II]{Yang} and \cite{S-Y}) which can be
constructed by the following procedure:
\begin{gather*}
  \mathbf{H}(2^{1})=
  \begin{pmatrix}
    1&1\\
1&-1
  \end{pmatrix},
\\
\mathbf{H}(2^{2})=
\begin{pmatrix}
  1&1&1&1\\
1&-1&1&-1\\
1&1&-1&-1\\
1&-1&-1&1
\end{pmatrix},\\
\mathbf{H}(2^{k})=
\begin{pmatrix}
  H(2^{k-1})&H(2^{k-1})\\
H(2^{k-1})&-H(2^{k-1})
\end{pmatrix},~\forall k\in\mathbb{N}, k\geq 2.
\end{gather*}

Walsh matrices and the related Hadamard matrices can be a helpful tool
for modelling and solving geometric problems in Banach spaces and high
dimensions (see, e.g., \cite{Sch} and \cite[p. 51]{Zong}). Since any
Walsh matrix is a Hadamard matrix, we have
\begin{equation}\label{eq:Hadamard}
  \mathbf{H}(2^{n})\mathbf{H}(2^{n})^{\rm
    T}=2^{n}\mathbf{I}_{2^{n}},~\forall n\in\mathbb{N}, n\geq 1.
\end{equation}
\par
For each integer $n\geq 2$, let $(\alpha_{i,j})_{0\leq i,j\leq
  2^{n}-1}$ be the matrix $\mathbf{H}(2^{n})$. For each integer $0\leq
i\leq 2^{n}-1$, put 
\begin{displaymath}
b_{i}=(\alpha_{i,j})_{j=0}^{2^{n}-1}~\text{and}~ a_{i}=(\alpha_{i,j})_{j=1}^{2^{n}-1}.   
\end{displaymath}
Then
\begin{displaymath}
  V=\{a_{i}:~0\leq i\leq 2^{n}-1\}
\end{displaymath}
is a set of $2^{n}$ points in $\mathbb{R}^{2^{n}-1}$. By
\eqref{eq:Hadamard}, we have that
\begin{displaymath}
\{b_{i}:~0\leq i\leq 2^{n}-1\}
\end{displaymath}
is linearly independent and, therefore, $V$ is affinely
independent. Thus $S={\rm conv}V$ is a simplex in
$\mathbb{R}^{2^{n}-1}$.

\begin{prop}\label{prop:main}
  Let $\Mspace=l_{1}^{2^{n}-1}$, and $\norm{\cdot}$ be the norm of
  $\Mspace$. Then
  \begin{enumerate}
  \item $\diam{S}=2^{n}$;
  \item for each vertex $v$ of $S$ and each point $w$ in the facet of
    $S$ opposite to $v$, $\norm{v-w}=2^{n}$; moreover, the distance
    from $v$ to ${\rm aff}F_{v}$ is $2^{n}$;
  \item $\mball\subseteq S$;
  \item $\thick{S}=2$;
  \item $S$ is not reduced.
  \end{enumerate}
\end{prop}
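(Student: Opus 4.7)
The plan is to set up three basic facts about the Walsh matrix $\mathbf{H}(2^n) = (\alpha_{i,j})$ and then invoke them part-by-part. From the recursive construction together with \eqref{eq:Hadamard} I would record: (a) the zeroth column of $\mathbf{H}(2^n)$ is the all-ones vector, hence $\alpha_{i,0}=1$ for every $i$, and in particular $a_0=(1,\ldots,1)$; (b) for $i\ne j$ the rows satisfy $\langle b_i,b_j\rangle=0$, which after dropping the zeroth coordinate becomes $\sum_{k=1}^{2^{n}-1}\alpha_{i,k}\alpha_{j,k}=-1$; (c) for each $k\ge 1$ the $k$-th column is orthogonal to the all-ones column, so $\sum_i \alpha_{i,k}=0$ and consequently $\sum_{i=0}^{2^{n}-1} a_i = \origin$.

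For (1), the coordinates of $a_i - a_j$ lie in $\{-2,0,2\}$, and fact~(b) rearranges to the statement that exactly $2^{n-1}$ of them are nonzero, yielding $\norm{a_i-a_j}=2^n$; the diameter of a simplex is attained on vertices. For (2), I would write an arbitrary point $w$ of $F_v$ as $\sum_{j\ne i}\lambda_j a_j$ and rewrite the $k$-th coordinate of $a_i-w$ as $\alpha_{i,k}(1-\sum_{j\ne i}\lambda_j \alpha_{i,k}\alpha_{j,k})$, observing that the parenthesized factor is nonnegative (it equals one minus a convex combination of $\pm 1$). Summing $|\cdot|$ over $k$ and swapping orders then lets me apply fact~(b) to collapse the total to $2^n$. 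For the second assertion of (2), I would identify the Euclidean equation of $\mathrm{aff}\,F_v$ as $\langle a_i,x\rangle = -1$ (since $\langle a_i,a_j\rangle = -1$ for $j\ne i$) and use the standard $\ell_1$--$\ell_\infty$ duality: the distance from $a_i$ equals $|\langle a_i,a_i\rangle+1|$ divided by the maximum absolute value of a coordinate of $a_i$, giving $2^n/1=2^n$.

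For (3), I would group vertices by sign of coordinate $k$, setting $I_k^{\pm}=\{j:\alpha_{j,k}=\pm 1\}$; column orthogonality yields $|I_k^{\pm}|=2^{n-1}$ and $\sum_{j\in I_k^+}\alpha_{j,l}=0$ for $l\ne k$ with $l\ge 1$, so the convex combination $\frac{1}{2^{n-1}}\sum_{j\in I_k^+}a_j$ is the standard basis vector $e_k$, and likewise $-e_k$ arises from $I_k^-$; taking convex hulls delivers $\mball\subseteq S$. Part (4) is then immediate: monotonicity of thickness under inclusion forces $\thick{S}\ge\thick{\mball}=2$, while the pair of supporting hyperplanes $\{x_{2^n-1}=\pm 1\}$ lies at Minkowskian distance $2$ by the same duality argument, so $\thick{S}\le 2$.

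For (5), I would mimic the three-dimensional construction by cutting with the hyperplane $\sum_{k=1}^{2^n-1}x_k=1$ and retaining the halfspace $H^-=\{x:\sum_k x_k\le 1\}$. Fact~(a) gives that $a_0$ has coordinate sum $2^n-1\ge 3$, so $a_0\notin H^-$ and $S\cap H^-$ is a proper subset of $S$; conversely each vertex $\pm e_k$ of $\mball$ has coordinate sum $\pm 1\le 1$, so $\mball\subseteq S\cap H^-$, and monotonicity of thickness then forces $\thick{S\cap H^-}\ge 2=\thick{S}$, which means $S$ is not reduced. The main obstacle I expect is the first assertion of (2): showing the distance equals $2^n$ for \emph{every} point of $F_v$ rather than merely at vertices or at the centroid. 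This rests on the slightly-off-orthogonality identity $\sum_{k=1}^{2^{n}-1}\alpha_{i,k}\alpha_{j,k}=-1$ produced by deleting a single coordinate, and on arranging the signs carefully so that the absolute values open up with uniform sign across all $k$.
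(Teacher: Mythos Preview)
Your proposal is correct; every step checks out. The routes diverge from the paper chiefly in parts (2) and (3).

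For the first assertion of (2) the paper does not compute at all: it notes that $z\mapsto\norm{v-z}$ is convex on $F_v$, equals $2^n$ at every vertex of $F_v$ by part (1), and equals $2^n$ at the centroid $w=\frac{1}{2^n-1}\sum_{z\in V\setminus\{v\}}z=-\frac{1}{2^n-1}v$ (using $\sum_i a_i=\origin$); a convex function attaining its maximum at a relative interior point is constant, so the value is $2^n$ throughout $F_v$. The same convexity observation then gives the distance to $\mathrm{aff}\,F_v$. Your direct expansion---factoring out $\alpha_{i,k}$ so that each summand opens with a uniform sign, then collapsing via $\sum_k\alpha_{i,k}\alpha_{j,k}=-1$---is a clean alternative that makes the role of the ``slightly off'' orthogonality completely explicit, and your duality computation for the distance to $\mathrm{aff}\,F_v$ is more self-contained than the paper's appeal to convexity.

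For (3) the paper proceeds dually: it shows that each facet hyperplane $\mathrm{aff}\,F_v$ supports $\mball$ at $-\frac{1}{2^n-1}v$, using the distance statement from (2). Your argument is primal and independent of (2): you exhibit each $\pm e_k$ as an explicit convex combination of vertices via the sign-partition $I_k^{\pm}$ and column orthogonality. This is arguably more transparent, and it decouples (3) from (2). The remaining parts (1), (4), (5) match the paper's reasoning essentially verbatim; in (4) you might add one line noting that the vertices of $S$ lie in the cube $Q$ so that $\{x_{2^n-1}=\pm1\}$ genuinely support $S$, which the paper records as $\mball\subseteq S\subseteq Q$.
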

\begin{proof}
  1. By \eqref{eq:Hadamard}, for each set
  $\{i,j\}\subset\{k:~k\in\mathbb{Z}, 0\leq k\leq 2^{n}-1\}$ the
  inner product of $b_{i}$ and $b_{j}$ is $0$. Therefore $b_{i}$ and
  $b_{j}$ are different in precisely $2^{n-1}$ coordinates. This
  implies that
  \begin{displaymath}
    \norm{a_{i}-a_{j}}=2^{n}.
  \end{displaymath}
Thus, the distance between each pair of the vertices of $S$ is
$2^{n}$, which implies that $\diam{S}=2^{n}$.
\par
2. Being a Hadamard
matrix, $\mathbf{H}(2^{n})$ must have the following property: each
column of $\mathbf{H}(2^{n})$, except for
the first one, is evenly divided between $1$ and $-1$. Thus
\begin{displaymath}
  \sum\limits_{i=0}^{2^{n}-1}a_{i}=\origin.
\end{displaymath}
Let $v$ be an arbitrary
vertex of $S$, and $F_{v}$ be the facet of $S$ opposite to $v$. Then
$F_{v}={\rm conv}(V\setminus\{v\})$, and the point
\begin{displaymath}
  w=\frac{1}{2^{n}-1}\sum\limits_{z\in V\setminus\{v\}}^{}z
\end{displaymath}
is a relatively interior point of $F_{v}$. Moreover,
\begin{align*}
  \norm{v-w}&=\norm{v-\frac{1}{2^{n}-1}\sum\limits_{z\in
              V\setminus\{v\}}^{}z}\\
&=\norm{v+\frac{1}{2^{n}-1}v}\\
&=2^{n}.
\end{align*}
Since the functional
\begin{align*}
  f:~F_{v}&\mapsto \mathbb{R}\\
z&\mapsto \norm{v-z}
\end{align*}
is convex, 
\begin{displaymath}
  \norm{v-z}=2^{n},~\forall z\in F_{v}.
\end{displaymath}
The second part of the result follows from the fact that the functional
\begin{align*}
  f:~{\rm aff}F_{v}&\mapsto \mathbb{R}\\
z&\mapsto \norm{v-z}
\end{align*}
is convex and constant in $F_{v}$.
\par
3. For each vertex $v$ of $S$, denote by $H_{v}$ the supporting
halfspace of $S$ determined by $F_{v}$. Then 
\begin{displaymath}
  S=\bigcap\limits_{v\in V}^{}H_{v}.
\end{displaymath}
Thus we only need to show that, for each $v\in V$, $H_{v}$ is also a
supporting halfspace of $\mball$. Since $\origin\in S$, as can be easily verified, we only need to show that each hyperplane determined
by $F_{v}$, which is ${\rm aff}F_{v}$, is a supporting hyperplane of $\mball$ at the point
$-(1/(2^{n}-1))v$. Clearly,
\begin{displaymath}
  -\frac{1}{2^{n}-1}v\in F_{v}\cap \mball.
\end{displaymath}
We only deal with the case when $v=a_{0}$; the other cases can be
dealt with in a similar way. For each point $w\in {\rm aff}F_{v}$,
there exist numbers $\lambda_{1},\cdots,\lambda_{2^{n}-1}$ such that
\begin{displaymath}
  w=\sum\limits_{i=1}^{2^{n}-1}\lambda_{i}a_{i}~\text{and}~\sum\limits_{i=1}^{2^{n}-1}\lambda_{i}=1.
\end{displaymath}
Therefore,
\begin{align*}
\norm{w}&=\frac{1}{2^{n}}\norm{2^{n}w+v-v}\\
&=\frac{1}{2^{n}}\norm{w+(2^{n}-1)\left(w+\frac{1}{2^{n}-1}v\right)-v}\\
&\geq \frac{1}{2^{n}} 2^{n}=1,
\end{align*}
and the inequality holds since the distance from $v$ to ${\rm
  aff}F_{v}$ is $2^{n}$ and
\begin{displaymath}
  w+(2^{n}-1)\left(w+\frac{1}{2^{n}-1}v\right)=w+(2^{n}-1)\left(w-\frac{-1}{2^{n}-1}v\right)\in {\rm aff}F_{v}.
\end{displaymath}
Thus ${\rm aff}F_{v}$ is a supporting hyperplane of $\mball$. It
follows that $\mball\subseteq S$.
\par
4. Put 
\begin{displaymath}
  H_{1}=\{(\eta_{i})_{i=1}^{2^{n}-1}:~\eta_{2^{n-1}}=1\},~  H_{2}=\{(\eta_{i})_{i=1}^{2^{n}-1}:~\eta_{2^{n-1}}=-1\}.
\end{displaymath}
We have $\mball\subseteq S$. Thus, to show that $\thick{S}=2$, it
suffices to show that $H_{1}$ and $H_{2}$ are supporting hyperplanes of
$S$, which is clear since each of $H_{1}$ and $H_{2}$ supports both
$\mball$ and its polar body
\begin{displaymath}
  Q=\{(\eta_{i})_{i=1}^{2^{n}-1}:~\max\{|\eta_{i}|:~1\leq i\leq 2^{n}-1\}=1\},
\end{displaymath}
and $\mball\subseteq S\subseteq Q$.

\par
5. Let 
\begin{displaymath}
  H=\left\{(\eta_{i})_{i=1}^{2^{n}-1}:~\sum\limits_{i=1}^{2^{n}-1}\eta_{i}\leq
  1\right\}.
\end{displaymath}
Then $S\cap H$, a proper subset of $S$, is a convex body containing
$\mball$ whose thickness is $2$. Thus $S$ is not reduced.\qed
\end{proof}

Proposition \ref{prop:main} also shows that, for any positive number $\varepsilon $,
there exists a positive integer $n$, an $n$-dimensional Minkowski space
$\Mspace$, and a complete set $K$ in $\Mspace$ such that the ratio of
$\thick{K}$ to $\diam{K}$ is less than $\varepsilon
$. Anyway, since each non-trivial complete set (i.e., a complete set
containing at least two distinct points) in a Minkowski space has an interior point, its thickness is strictly greater
than 0.
\par
The situation in infinite dimensional Banach spaces is quite different:
it is possible that a non-trivial complete set is contained in a
hyperplane (cf. \cite[Example 4.6]{M-P-P}), and the thickness of such
a complete set is clearly 0.

\par
\bigskip 

We finish this paper with a very natural question, clearly not posed
in \cite{L-M2} and referring to all dimensions $n\geq 3$ (by
Meissner's theorem the planar case is clear; see \cite[p. 98]{M-S}).

\textbf{Problem:~}\textit{In which Minkowski spaces is any complete set also
reduced? Try to give geometric characterizations of their unit balls!}
\par

In some sense vice versa, it might be interesting to collect (e.g., as
partial results in that direction) typical geometric properties shared
by those Minkowski spaces which contain complete sets that are not
reduced. All this is directly motivated by our construction.

\par
More generally, it would be nice to give a complete clarification of
the containment and intersection relations between the families of
reduced, complete, and constant width bodies (and between suitable
subfamilies of them) in Minkowski Geometry, for example via a
completely described Venn diagram. This is of course closely related
to recent research on those Minkowski spaces in which completeness and
constant width are equivalent notions; see, e.g., \cite{M-Sch}.


\bibliographystyle{spmpsci}      
\bibliography{mbib}   

%
%

\end{document}